\documentclass[12pt,letterpaper,titlepage]{amsart}
\usepackage{amsmath, amssymb, amsthm, amsfonts,amscd,xr}

\newcommand{\Z}{\mathbb{Z}}
\newcommand{\R}{\mathbb{R}}
\newcommand{\C}{\mathbb{C}}

\newcommand{\oline}{\overline}

\def\beq{\begin{equation}}
\def\eeq{\end{equation}}

\def\arr{\hbox to 20pt{\rightarrowfill}}

\def\e{\varepsilon}

\def\sl{\mathfrak {sl}}

\newenvironment{res}
               {\begin{equation}
\begin{minipage}{0.85\textwidth}}
               { \end{minipage}\end{equation} }
\def\ber{\begin{res} }
\def\eer{\end{res}}

\numberwithin{equation}{section}
\newtheorem{thm}{Theorem}[section]

%
%

\newtheorem{lem}[thm]{Lemma}

%
%
\newtheorem{cor}[thm]{Corollary}

\newtheorem{prop}[thm]{Proposition}

\newtheorem{rem}[thm]{Remark}

\makeatletter
\def\section{\@startsection {section}{1}{\z@}{3.5ex plus 1ex minus
    .2ex}{2.3ex plus .2ex}{\large\bf}}
    \def\subsection{\@startsection{subsection}{2}{\z@}{3.25ex plus 1ex minus
 .2ex}{1.5ex plus .2ex}{\bf}}
\makeatother
\def\pf{{\em Proof}.\, }

\def\e{\epsilon}

\def\af{\mathfrak{a}}
\def\gf{\mathfrak{g}}

\def\kf{\mathfrak{k}}

\def\pf{\mathfrak{p}}

\def\P{\mathbb{P}}

\def\Aut{\operatorname{Aut}}
\def\ad{\operatorname{ad}}
\def\diag{\operatorname{diag}}
\def\Gr{\operatorname {Gr}}
\def\im{\operatorname{im}}

\def\SO{\operatorname{SO}}

\def\tr{\operatorname{tr}}

\def\Sym{\operatorname{Sym}}
\def\Span{\operatorname{span}}

\def\O{\mathcal{O}}

\def\W{\mathcal{W}}
\def\X{\mathcal{X}}
\def\Y{\mathcal{Y}}
\def\Zc{\mathcal{Z}}

\renewcommand{\Im}{\mbox{\rm Im}\,}
\makeindex
\begin{document}
\title[Normal forms]
{Normal forms for real quadratic forms}
\author{Bernhard Kr\"otz}
\address{Max-Planck-Institut f\"ur Mathematik, Vivatsgasse 7,
D-53111 Bonn, Germany\\ email: kroetz@mpim-bonn.mpg.de}
\author{Henrik Schlichtkrull}
\address{Department of Mathematical Sciences\\ 
University of Copenhagen\\ Universitetsparken 5 \\ 
DK-2100 K\o{}benhavn, Denmark
\\ email: schlicht@math.ku.dk} 
\date{April 29, 2008}
\begin{abstract}{We investigate the 
non-diagonal normal forms of a quadratic form 
on $\R^n$, in particular for $n=3$. For this case
it is shown that the set of normal forms
is the closure of a 5-dimensional submanifold
in the 6-dimensional Grassmannian of 2-dimensional 
subspaces of $\R^5$.}\end{abstract}
\maketitle 

\maketitle
\section{Introduction}

According to the principal axes theorem
every real quadratic form in $n$ variables
allows an orthogonal diagonalization with normal form
$$A_1x_1^2+\dots+A_nx_n^2,$$
where $A_1,\dots,A_n\in\R$.
In this article we investigate (for the case $n=3$)
the existence of other normal forms. 

To be more precise, let $q_1,\dots,q_n$ be 
quadratic forms on $\R^n$. If for every 
quadratic form $q$ on $n$-dimensional Euclidean space
there exists an orthonormal basis
in which $q$ takes the form
$$q(x)=A_1q_1(x)+\dots+A_nq_n(x)$$ 
for some set of coefficients,
we say that this expression is a {\it normal form} of $q$.

Passing to matrices,
let us consider ${V}=\Sym(n,\R)$, the vector space of symmetric 
$n\times n$-matrices. On ${V}$ there is the natural 
action of the special orthogonal group $K:=SO(n,\R)$ by conjugation, say 
$$k\cdot X:= kXk^{-1} \qquad (k\in K, X\in {V})\, .$$ 
If ${D}$ denotes the space of diagonal matrices in 
${V}$, then the principal axes theorem asserts that
$${V}= K\cdot {D}:=\{k\cdot d\mid k\in K, d\in D\}.$$ 
Furthermore if $d_1, d_2\in {D}$,
then $K\cdot d_2=K\cdot d_1$ if and only if 
$d_2$ is obtained from $d_1$ by a permutation of coordinates (the set of 
eigenvalues is unique). 

The question we address is, {\it for which $n$-dimensional subspaces
${W}$ in ${V}$ is ${V}=K\cdot{W}$?}
It would be tempting to assert that
the unique property of ${D}$ (and its conjugates by $K$), 
which causes the principal axes theorem, 
is that it is {\it abelian}. 
However this is not correct, in fact there exist
non-abelian $n$-dimensional subspaces $W$ with
${V}=K\cdot W$ (see the theorem below).

There is some redundancy in the problem, namely the center of ${V}$
on which $K$ acts trivially. 
Let us remove that and define ${\pf}:={V}_{\tr =0}$ to be the 
space of zero-trace  elements in ${V}$. Likewise we set 
$\af:= {D}_{\tr=0}$.  
The principal axes theorem now reads as 
$$\pf = K\cdot \af\, .$$

\begin{thm} \label{th=1}Let  $\pf=\Sym(3,\R)_{\tr =0}$ and 
$K=\SO(3,\R)$. Define 
$$W:=\left\{ X_{\mu\lambda}:=\begin{pmatrix} \mu & 0 & \lambda \\ 0 & -\mu & 0\\ \lambda & 0 & 0
\end{pmatrix}\mid \mu,\lambda\in\R\right\}\, .$$ 
Then $K\cdot W=\pf$. 
\end{thm}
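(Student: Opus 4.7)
My plan is to reduce to an eigenvalue-matching problem. Every $Y \in \pf$ is real symmetric and traceless, so it has three real eigenvalues $a \le b \le c$ with $a+b+c = 0$; in particular $a \le 0 \le c$. A standard consequence of the spectral theorem is that two real symmetric matrices with the same spectrum are conjugate by an element of $O(3,\R)$. Thus it suffices to prove two things: (i) every such triple $(a,b,c)$ is the spectrum of some $X_{\mu\lambda} \in W$, and (ii) the conjugating orthogonal matrix can be taken in $\SO(3,\R)$.

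For (i), I would first read off the spectrum of $X_{\mu\lambda}$. The vector $e_2$ is always an eigenvector with eigenvalue $-\mu$, and the orthogonal complement $\Span(e_1,e_3)$ is invariant, the restriction being
$$\begin{pmatrix} \mu & \lambda \\ \lambda & 0 \end{pmatrix},$$
whose characteristic polynomial is $x^2 - \mu x - \lambda^2$. The clean idea is then to set $\mu := -b$ (matching $-\mu$ to the middle eigenvalue $b$) and $\lambda := \sqrt{-ac}$ (well-defined because $ac \le 0$). The remaining characteristic polynomial becomes $x^2 + bx + ac = x^2 - (a+c)x + ac = (x-a)(x-c)$, so $X_{\mu\lambda}$ has spectrum $\{a,b,c\}$, as required.

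For (ii), once a conjugating $k \in O(3,\R)$ is produced from a pair of orthonormal eigenbases of $Y$ and $X_{\mu\lambda}$, if $\det k = -1$ one replaces any one of the eigenvectors of $X_{\mu\lambda}$ by its negative; this alters $k$ by a sign in one column while preserving the diagonalization, hence yields a new $k \in \SO(3,\R)$ with $kX_{\mu\lambda}k^{-1}=Y$.

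There is no real obstacle in this argument; the only subtlety is the choice of which eigenvalue of $Y$ to assign to $e_2$. That $b$ (the middle one) is the correct choice is forced by the sign structure of the spectrum of the $2\times 2$ block: since its determinant is $-\lambda^2 \le 0$, its two eigenvalues have opposite signs (or one is zero), so they must play the role of $a$ and $c$, leaving $b$ for $-\mu$. This explains why the recipe $\mu = -b$, $\lambda = \sqrt{-ac}$ is essentially the only one available.
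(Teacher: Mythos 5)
Your proof is correct and follows essentially the same route as the paper: set $\mu$ equal to minus the middle eigenvalue and $\lambda=\sqrt{-\nu_1\nu_3}$, then match characteristic polynomials to conclude conjugacy. The only addition is your explicit remark on passing from $O(3,\R)$ to $\SO(3,\R)$ by a sign change of one eigenvector, a detail the paper leaves implicit.
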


\begin{proof} A more general result will be established later. 
Here we can give a simple proof.

Let $A\in\pf$ be given, and
let $\nu_1\ge \nu_2 \ge \nu_3$ be its eigenvalues. 
Then $\nu_1+\nu_2+\nu_3=0$, and
hence $\nu_1 \ge 0 \ge \nu_3$. 
Let 
$$\mu=-\nu_2=\nu_1+\nu_3 \quad,\quad
\lambda=\sqrt{-\nu_1\nu_3}.$$
The matrix $X_{\mu\lambda}$ has the characteristic polynomial 
$$
\begin{aligned}
\det \begin{pmatrix} \mu-x&0&\lambda\\0&-\mu-x&0\\\lambda&0&-x\end{pmatrix}
&=(-\mu-x)(-(\mu-x)x-\lambda^2)
\\&=(\nu_2-x)(x-\nu_1)(x-\nu_3)
\end{aligned}
$$
Hence $A$ and $X_{\mu\lambda}$ have the same eigenvalues, and thus they are conjugate.
\end{proof}

\begin{cor} Every trace free real quadratic form in three variables allows
a normal form of the type 
$$A(x^2-y^2)+ B xz$$
for $A, B\in \R$. 
\end{cor}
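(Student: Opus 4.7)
The plan is to translate the corollary directly into the matrix setting of Theorem \ref{th=1}. A quadratic form $q$ on $\R^3$ corresponds (via the standard inner product) to a unique symmetric matrix $Q \in \Sym(3,\R)$ characterized by $q(v) = v^T Q v$. The hypothesis that $q$ is trace free means, by definition of the ``trace'' of a quadratic form as the sum of its diagonal coefficients, that $Q \in \pf = \Sym(3,\R)_{\tr = 0}$.

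Next, I would invoke Theorem \ref{th=1} to obtain $k \in K = \SO(3,\R)$ and real numbers $\mu, \lambda$ with $Q = k \cdot X_{\mu\lambda} = k X_{\mu\lambda} k^{-1}$. Let $e_1, e_2, e_3$ denote the columns of $k$; since $k \in \SO(3,\R)$ these form an orthonormal basis of $\R^3$. For $v \in \R^3$ write $v = x e_1 + y e_2 + z e_3$, so that $k^{-1}v = (x,y,z)^T$. Then
\[
q(v) = v^T Q v = (k^{-1}v)^T X_{\mu\lambda} (k^{-1}v),
\]
so the task reduces to reading off the quadratic form associated to the matrix $X_{\mu\lambda}$ in the coordinates $(x,y,z)$.

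A direct expansion using the definition of $X_{\mu\lambda}$ yields
\[
(x,y,z)\,X_{\mu\lambda}\,(x,y,z)^T = \mu x^2 - \mu y^2 + 2\lambda xz = \mu(x^2 - y^2) + 2\lambda\, xz.
\]
Setting $A := \mu$ and $B := 2\lambda$ gives the desired normal form $A(x^2 - y^2) + Bxz$ of $q$ in the orthonormal basis $(e_1, e_2, e_3)$.

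There is no real obstacle here: the content of the corollary is entirely captured by Theorem \ref{th=1}, and the only work is the bookkeeping between the $K$-conjugation action on symmetric matrices and an orthonormal change of variables in the associated quadratic form, together with the trivial computation of the form attached to $X_{\mu\lambda}$.
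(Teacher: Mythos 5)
Your proof is correct and is exactly the argument the paper intends: the corollary is stated as an immediate consequence of Theorem \ref{th=1}, and you have simply spelled out the routine translation between conjugation of symmetric matrices by $k\in\SO(3,\R)$ and an orthonormal change of variables, together with the computation $\mu(x^2-y^2)+2\lambda xz$ for the form attached to $X_{\mu\lambda}$. No gaps; the identification $A=\mu$, $B=2\lambda$ is fine.
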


\par Let us more generally consider a real semi-simple Lie algebra 
$\gf$ with Cartan decomposition $\gf =\kf +\pf$. The space
$\pf=\Sym(n,\R)_{\tr =0}$ is obtained in the special case
$\gf=\sl(n,\R)$.
Let $\af\subset \pf$ be a maximal abelian subspace and $K=e^{\ad \kf}$. 
According to standard structure theory 
of semi-simple Lie algebras the following generalization of the
principal axes theorem holds:
\begin{itemize}
\item $\pf = K\cdot \af$. 
\item $K\cdot X = K\cdot Y$ for $X,Y\in \af$ if and only if $\W\cdot X=\W\cdot Y$ where 
$\W=N_K(\af)/ Z_K(\af)$ is the Weyl group. 
\end{itemize}

\par Let $r:=\dim \af$ be the real rank of $\gf$. We consider 
$\Gr_r(\pf)$ the Grassmannian of $r$-dimensional subspaces in $\pf$. 
Inside of $\Gr_r(\pf)$ we consider the subset 
$$\X:=\{ W\in \Gr_r(\pf)\mid K\cdot W =\pf\}\, .$$   
Then the following are immediate: 
\begin{itemize}
\item $\X=\Gr_r(\pf)$ if $r=1$. 
\item $\X\supset \X_{\rm ab}:=\{ W \in\Gr_r(\pf) \mid W \
\hbox{abelian}\}\simeq K/N$
where $N=N_K(\af)$. 
\end{itemize}
If $r\geq 2$ and $\gf$ simple, then $\X\subsetneq \Gr_r(\pf)$. 
The problem we pose is to determine $\X$ in general. 

In this paper we describe the set
$\X$ for $\gf=\sl(3,\R)$, in which case $r=2$
and $\dim\Gr_2(\pf)=6$. 
It turns out that $\X$ is 
dominated by a real algebraic variety of dimension 5:  
there exists a surjective algebraic map: 
$$\Phi: K\times_{N_0} \P(\R^3)\to \X$$
with $N_0\simeq (\Z/4\Z)\rtimes \Z/2\Z$ and generically trivial 
fibers (see Theorem \ref{th=2} in Section 3 below).  

\par In Section 4 we give an alternative approach to 
the problem of characterizing $\X$ via tools from algebraic geometry, 
in particular Galois-cohomology. 
This section evolved out of several discussions with G\"unter Harder and 
we thank him for explaining us some mathematics which was unfamiliar 
to us. 

\par For general $\gf$ we  do not know the nature of $\X$.

\section{Description by invariants}
Let $\gf=\sl(3,\R)$ and $\af=\diag(3,\R)_{\tr
  =0}$. We give the following
description of $\X$, which will lead to the classification
in the following sections.

\begin{thm}\label{eigenvalue theorem}
The two dimensional subspace $W\in \Gr_2(\pf)$
belongs to $\X$ if and only if it contains a non-zero
matrix $X$ with two equal eigenvalues.
\end{thm}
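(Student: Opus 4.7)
The plan is to reduce the statement $K\cdot W=\pf$ to the surjectivity of an invariant polynomial map onto a fixed region in $\R^2$. For $X\in\pf$ the characteristic polynomial has the form $x^3+P(X)x+Q(X)$, where $P(X)=-\tfrac12\tr(X^2)$ and $Q(X)=-\det(X)$; set $\chi(X):=(P(X),Q(X))\in\R^2$. Every $X\in\pf$ has real eigenvalues, and two real symmetric matrices with the same spectrum are conjugate by $\SO(3,\R)$ (choose orthonormal eigenbases and, if necessary, flip the sign of one eigenvector to obtain determinant one). Hence the $K$-orbits on $\pf$ are in bijection with the points of the closed cusped region
\[
\bar R := \chi(\pf) = \{(P,Q)\in\R^2 : P\le 0,\ 4P^3+27Q^2\le 0\}.
\]
Its boundary $\partial\bar R$ is the discriminant cusp $\bigl(-3\alpha^2,\,2\alpha^3\bigr)$, $\alpha\in\R$, and $\partial\bar R\setminus\{0\}$ corresponds precisely to the non-zero matrices with two equal eigenvalues. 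Thus $K\cdot W=\pf$ if and only if $\chi(W)=\bar R$, and the theorem is reduced to the surjectivity of $\chi|_W$ onto $\bar R$.

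The forward implication is then immediate: any preimage in $W$ of a non-origin boundary point of $\bar R$ is a non-zero element of $W$ with two equal eigenvalues.

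For the converse, assume $X_0\in W$ is non-zero with two equal eigenvalues, rescaled so that $\tr(X_0^2)=1$. The key observation is that $P$ depends only on the Frobenius norm, so $P\equiv-\tfrac12$ on the unit circle $S\subset W$; hence $\chi(S)$ lies in the horizontal slice $\{P=-\tfrac12\}\cap\bar R$, which is the segment $\{-\tfrac12\}\times\bigl[-\tfrac1{3\sqrt 6},\,\tfrac1{3\sqrt 6}\bigr]$. Since $X_0$ has a repeated eigenvalue, $\chi(X_0)$ lies on $\partial\bar R$, forcing $Q(X_0)=\pm\tfrac1{3\sqrt 6}$; the antipode $-X_0\in S$ realizes the opposite extreme because $Q$ is an odd (cubic) function. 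Continuity of $Q$ together with the connectedness of $S$ now yields $Q(S)=\bigl[-\tfrac1{3\sqrt 6},\,\tfrac1{3\sqrt 6}\bigr]$ by the intermediate value theorem, so $\chi(S)$ exhausts the entire slice. Finally, $\chi$ is weighted homogeneous of weights $(2,3)$, so for any $(P_0,Q_0)\in\bar R$ with $P_0<0$ one rescales a suitable $X\in S$ by $\lambda=\sqrt{-2P_0}$ to realize $\chi(\lambda X)=(P_0,Q_0)$; the remaining point $(0,0)$ is $\chi(0)$. This gives $\chi(W)=\bar R$, as required.

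The main preparatory point to nail down is the orbit-classification identity $\chi(\pf)=\bar R$ together with the fact that $\chi$ separates $K$-orbits (rather than merely $\mathrm{O}(3,\R)$-orbits); once this is in place, the heart of the proof is the elementary continuity argument on the unit circle of $W$, exploiting that $-X_0\in W$ automatically supplies the opposite extreme of the $Q$-range.
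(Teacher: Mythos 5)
Your proposal is correct and follows essentially the same route as the paper: reduce to the invariants $(\tr X^2,\det X)$, use that they separate $K$-orbits, and exploit homogeneity, oddness of the determinant, and connectedness of the unit circle in $W$ to show the full range of values is attained once an element with a repeated eigenvalue (the extreme case) lies in $W$. The only difference is cosmetic: you identify the attainable region and its boundary via the cubic discriminant $4P^3+27Q^2\le 0$, whereas the paper computes the extreme value $\pm 54^{-1/2}$ and its locus by a Lagrange multiplier argument on the diagonal subspace $\af$.
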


For example, with the notation
in Theorem \ref{th=1}, the matrix 
$X_{\mu\lambda}$ with $\mu=-1$ and $\lambda=\sqrt2$ has
eigenvalues $1,1,-2$. Hence the space $W$ in this theorem
belongs to $\X$.

\begin{proof} 
That this is a necessary condition is clear, since $W\in\X$
means that $W$ contains at least one element from {\it every} 
$K$-orbit
on $\pf$.

In order to describe the $K$-orbits,
we recall some basic invariant theory. 
Let
\begin{align*} u_1(X) & =\tr X^2 \\ 
u_2(X) &= \det X \end{align*}
for $X\in\pf$. Then $u_1,u_2\in \C[\pf]^K$, the ring 
of $K$-invariant polynomials on $\pf$. 
In fact, it is a well-known fact that
$$\C[\pf]^K =\C[u_1, u_2],$$
but we shall not use this here.

\begin{lem}\label{K-orbits} 
The level sets for $u=(u_1,u_2)$ are single $K$-orbits. 
\end{lem}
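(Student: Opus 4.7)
The plan is to reduce the lemma to the spectral theorem for real symmetric matrices (which is essentially the principal axes theorem recalled in the introduction) and then to verify that the pair of invariants $u=(u_1,u_2)$ completely determines the (unordered) multiset of eigenvalues of $X\in\pf$.

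First I would note that an element of $\pf=\Sym(3,\R)_{\tr=0}$ is diagonalizable by $K=SO(3,\R)$ with real eigenvalues $\nu_1,\nu_2,\nu_3$ satisfying $\nu_1+\nu_2+\nu_3=0$, and that two such matrices lie in the same $K$-orbit if and only if they share the same multiset of eigenvalues. This follows from the principal axes theorem stated in the introduction, together with the already noted fact that two diagonal elements of $\af$ are $K$-conjugate if and only if they differ by a permutation of coordinates.

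Next I would translate the invariants $u_1,u_2$ into elementary symmetric functions of the eigenvalues: writing $e_k$ for the $k$th elementary symmetric polynomial in $\nu_1,\nu_2,\nu_3$, we have $e_1=\tr X=0$, $u_1(X)=\tr X^2=\nu_1^2+\nu_2^2+\nu_3^2=e_1^2-2e_2=-2e_2$, and $u_2(X)=\det X=e_3$. Consequently the characteristic polynomial of $X$ equals
\[
t^3-e_1t^2+e_2t-e_3 \;=\; t^3-\tfrac12 u_1(X)\,t-u_2(X),
\]
so it is determined by the pair $(u_1(X),u_2(X))$.

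Combining these two observations finishes the proof: if $u(X)=u(Y)$ then $X$ and $Y$ have the same characteristic polynomial, hence the same multiset of real eigenvalues, and therefore lie in a common $K$-orbit; conversely $u_1,u_2$ are manifestly $K$-invariant, so each $K$-orbit is contained in a single level set of $u$. There is no real obstacle here beyond being careful that we are working with real symmetric matrices, which guarantees real eigenvalues and permits invoking the spectral theorem; the deeper content of the description of $\X$ will come in the following sections.
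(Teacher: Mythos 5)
Your proof is correct and follows the same route as the paper: both arguments rest on the facts that a $K$-orbit in $\pf$ is determined by the multiset of eigenvalues and that the characteristic polynomial of a trace-free symmetric $3\times3$ matrix is determined by $(u_1,u_2)$ (your monic polynomial $t^3-\tfrac12 u_1 t-u_2$ is just the negative of the paper's $-x^3+\tfrac12 u_1x+u_2$). You merely spell out the symmetric-function computation that the paper leaves as ``easily seen.''
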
 

\begin{proof} Each $K$-orbit is uniquely determined by a set of
eigenvalues (with multiplicities). It is easily seen that the
characteristic polynomial of a trace free $3\times 3$-matrix $X$ is
$$-x^3+\frac12 u_1(X)x + u_2(X).$$
The lemma follows immediately.
\end{proof}

It follows that $W$ belongs to $\X$ if and only if it
has a non-trivial intersection with each level set.
Notice that  $u_1(X)$ is the square
of the trace norm of $X$, for $X$ symmetric. 
In particular, $u_1(X)>0$ for
$X\neq 0$.
Since $u_1$ and $u_2$ are homogeneous, it suffices to consider
level sets of the form $\{ u_1=1, u_2=c_2\}$.

We thus consider for each $W\in \Gr_2(\pf)$
the unit sphere
$$W_1=\{X\in W\mid u_1(X)=1\},$$
and we define
$$J:=\{u_2(X)\mid X\in W_1\}.$$
Since $W_1$ is connected, $J$  is an interval.
Moreover, it is symmetric around $0$, since $u_2$ has
odd degree.
In particular, we denote by
$$ I:=\{ u_2(X) \mid X\in \af_1\}$$
the interval corresponding to the unit sphere
in $\af$.
We now show: 

\begin{lem} The interval $I$ is given by
$I =[-c,c]$, where $c={54}^{-1/2}$. Furthermore,
the extreme values $\pm c$
are obtained precisely in those elements $X\in\af_1$, 
which have two equal eigenvalues. 
\end{lem}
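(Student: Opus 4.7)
The plan is to identify $\af_1$ with the set of ordered triples $(a,b,c) \in \R^3$ satisfying $a+b+c = 0$ and $a^2+b^2+c^2 = 1$, so that the problem becomes extremizing $u_2(X) = abc$ on this set. Since $\af_1$ is compact and connected, and $u_2$ is continuous and odd, its image $I$ is automatically a closed symmetric interval $[-c,c]$ for some $c \ge 0$. It therefore suffices to compute the maximum of $u_2$ and to characterize where it is attained.

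To find $c$, I would introduce the elementary symmetric polynomials $e_1 = a+b+c$, $e_2 = ab+bc+ca$, $e_3 = abc$ of the diagonal entries. On $\af_1$ the Newton identity $a^2+b^2+c^2 = e_1^2 - 2e_2$ forces $e_1 = 0$ and $e_2 = -\tfrac12$. Consequently $a,b,c$ are the real roots of the depressed cubic
\begin{equation*}
t^3 - \tfrac12 t - u_2 = 0,
\end{equation*}
in agreement with the characteristic polynomial formula already used in Lemma~\ref{K-orbits}. Thus the range of $u_2$ on $\af_1$ consists precisely of those values $e_3 \in \R$ for which this cubic has three real roots.

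The reality condition is captured by the classical discriminant criterion: a depressed cubic $t^3 + pt + q$ has three real roots iff $-4p^3 - 27 q^2 \ge 0$. Substituting $p = -\tfrac12$ and $q = -u_2$ I obtain $\tfrac12 - 27 u_2^2 \ge 0$, i.e.\ $|u_2| \le 1/\sqrt{54}$. This gives $c = 54^{-1/2}$ and identifies $I = [-c,c]$.

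For the last assertion, recall that the discriminant vanishes exactly when the cubic has a repeated root. Hence $u_2(X) = \pm c$ on $\af_1$ if and only if the eigenvalues of $X$ contain a repeated value, which is the statement about two equal eigenvalues. Conversely, any $X \in \af_1$ with two equal eigenvalues must have eigenvalues $(a,a,-2a)$ with $6a^2 = 1$, giving $u_2(X) = -2a^3 = \mp 1/\sqrt{54}$, so both extremes are indeed realized. The only slightly nontrivial step is recognizing the discriminant condition for the cubic; once that is in hand the rest is bookkeeping.
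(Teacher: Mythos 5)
Your argument is correct, but it runs along a genuinely different line than the paper's. The paper works in explicit coordinates $D_{xy}=\diag(x,y,-x-y)$ and applies Lagrange multipliers to extremize $f_2=u_2$ subject to $f_1=u_1=1$; the collinearity condition forces $x=y$, $2x+y=0$ or $2y+x=0$, i.e.\ exactly the configurations with two equal diagonal entries, and evaluating at these critical points yields $\pm 54^{-1/2}$ (the interval structure itself coming from connectedness of $\af_1$ and oddness of $u_2$, as in the text preceding the lemma). You instead observe that a point of $\af_1$ is the same thing as an unordered triple of real roots of the depressed cubic $t^3-\tfrac12 t-e_3$ (consistent with the characteristic polynomial computed in Lemma~\ref{K-orbits}), so the range of $u_2$ is exactly the set of $e_3$ for which that cubic has all roots real; the discriminant criterion $-4p^3-27q^2\ge 0$ with $p=-\tfrac12$, $q=-u_2$ gives $|u_2|\le 54^{-1/2}$ at once, and equality is precisely the vanishing of the discriminant, i.e.\ a repeated eigenvalue. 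Your route buys the whole closed interval and the characterization of the extremizers in one stroke, without any calculus or case analysis, and it makes the link to ``two equal eigenvalues'' (the form in which the lemma is used in Theorem~\ref{eigenvalue theorem}) completely transparent; the paper's Lagrange computation is more elementary and stays in the explicit coordinates used elsewhere. One cosmetic point: you use $c$ both for the third root and for the endpoint $54^{-1/2}$, which you should disambiguate, and your final sign bookkeeping ($u_2=-2a^3=\mp 54^{-1/2}$ for $a=\pm 6^{-1/2}$) is fine since both signs occur.
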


\begin{proof} 
Let us introduce coordinates for $\af$, namely 
$$\af=\{D_{xy}:= \diag(x,y, -x-y)\mid x,y\in\R\}\, .$$
Furthermore, let us introduce two functions:
\begin{align*} f_1(x,y) & :=u_1(D_{xy})= 2 (x^2 + y^2 +xy)\\ 
f_2(x,y) & := u_2(D_{xy})= -xy(x+y)\, .\end{align*}
We wish to maximize/minimize  $f_2$ under the condition of 
$f_1=1$. For that we perform the method of Lagrange:  
$df_1 = 2(2x+y, 2y+x)$  and $df_2= -(y(2x+y), x(2y+x))$ have to be 
collinear. This can only happen in three cases: $x=y$, $2x+y=0$ or
$2y+x=0$. Notice that these are exactly the cases in which
two of the diagonal entries of $D_{xy}$ are equal.

We start with $x=y$. Here $f_1(x,x)=6x^2=1$ means that $x=\pm 6^{-1/2}$. 
Hence $f_2(x,x)= -2x^3 =  \pm 54^{-1/2}$. 
Secondly, if $2x+y=0$, 
then $f_1(x,-2x)= 6x^2=1$, so again $x= \pm 6^{-1/2}$. 
Hence $f_2(x, -2x)=-2x^3= \pm 54^{-1/2}$. Finally, the case
$2y+x=0$ is similar. 
\end{proof}

In order to complete the proof of Theorem \ref{eigenvalue theorem},
we only have to note that, as 
$\pf =K\cdot \af$ we have $J\subset I$ and 
equality $J=I$ holds if and only if $W\in\X$. 
\end{proof}

\begin{rem}\label{remark} 
(a) It follows from Theorem \ref{eigenvalue theorem} that
not all 2-dimensional subspaces $W\subset\pf$
belong to $\X$. 
An extreme case is
$$W=\left\{ \begin{pmatrix} 0 & 0 & 0 \\ 0 & \lambda &\mu 
\\ 0 & \mu & -\lambda
\end{pmatrix}\mid \lambda, \mu \in \R\right\}\, .$$
for which $u_2(W)=\{0\}$ and hence $W\not \in \X$. 
\par\noindent (b) Let us define a continuous function on $\Gr_2(\pf)$ by 
$$f: \Gr_2(\pf)\to \R_{\geq 0} ,  \ \ W\mapsto \max_{X\in W_1} u_2(X)$$ 
Then we get 
$$\X=\{ W\in \Gr_2(\pf)\mid f(W)= 54^{-1/2}\}$$
by our previous result. In particular, $\X$ is a closed subset of
$\Gr_2(\pf)$. 
\end{rem}

\section{$K$-orbits on $\X$}

We aim to describe $\X$ explicitly.
Our starting point is the following observation.
Let
$$X_0:=\begin{pmatrix} 1 & 0 & 0  \\ 0  & 1 & 0 \\ 0 & 0 & -2 
\end{pmatrix}\, ,$$
and observe that the two dimensional subspace 
$W\in \Gr_2(\pf)$
belongs to $\X$ if and only if it contains a vector 
in the $K$-orbit of $X_0$.

In fact this is an immediate consequence 
of our discussion in the previous section.  
Since we consider trace free matrices,
$X$ has two equal eigenvalues if and only if its
eigenvalues are $\nu,\nu,-2\nu$ for some $\nu\in\R$,
that is, $X$ is conjugate to $\nu X_0$.

Let $\Omega$ denote the 3-dimensional variety
$$\Omega=\{W\in\Gr_2(\pf)\mid X_0\in W\},$$
in $\Gr_2(\pf)$. The stabilizer $H\subset K$ of the line 
$\R X_0$ acts on $\Omega$. 
We have proved the following result: 

\begin{prop}\label{prop surjective} 
The map $(k,W)\mapsto k\cdot W$ from
$K\times_H\Omega$ to $\X$ is surjective. 
\end{prop}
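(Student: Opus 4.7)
The plan is to directly unwind the characterization noted in the paragraph just preceding the statement: a subspace $W\in\Gr_2(\pf)$ lies in $\X$ if and only if it contains some vector in the $K$-orbit of $X_0$. Given this reformulation, surjectivity is essentially built into the definitions, and only a short verification is needed.

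Concretely, I would fix $W\in\X$ and construct an explicit preimage. By Theorem~\ref{eigenvalue theorem}, $W$ contains a nonzero matrix $X$ with two equal eigenvalues; since $\tr X=0$ and $X\neq 0$, those eigenvalues must be of the form $\nu,\nu,-2\nu$ with $\nu\in\R^*$. Consequently $X$ is $K$-conjugate to $\nu X_0$, so there is some $k\in K$ with $X=\nu\,k\cdot X_0$. Setting $W':=k^{-1}\cdot W$, I get $\nu X_0=k^{-1}\cdot X\in W'$, and since $W'$ is a linear subspace and $\nu\neq 0$, this forces $X_0\in W'$, i.e., $W'\in\Omega$. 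The pair $(k,W')$ then represents a class in $K\times_H\Omega$ whose image under $(k,W')\mapsto k\cdot W'$ is precisely $W$, which is what surjectivity requires.

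There is no genuine obstacle here, since the substantive content lies in the eigenvalue characterization already proved. The one background point worth recording is that the associated bundle $K\times_H\Omega$ makes sense, i.e., that $H$ indeed acts on $\Omega$: any $h\in H$ stabilizes the line $\R X_0$ and preserves the trace norm (being orthogonal), hence satisfies $hX_0=\pm X_0$, so $h\cdot W'$ contains $X_0$ whenever $W'$ does. With this, the map in the statement is a well-defined $K$-equivariant morphism, and the construction above shows it is onto $\X$.
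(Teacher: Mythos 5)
Your proof is correct and follows essentially the same route as the paper: the paper also deduces surjectivity immediately from Theorem \ref{eigenvalue theorem}, noting that a nonzero trace-free matrix with two equal eigenvalues has eigenvalues $\nu,\nu,-2\nu$ and is therefore conjugate to $\nu X_0$, so that a $K$-translate of $W$ lies in $\Omega$. Your extra remark that $H$ preserves $\Omega$ (so the associated bundle makes sense) is a harmless and correct addition.
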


Notice that
$H=MT$ where $T$ is the maximal torus 
$$T:=\begin{pmatrix} \SO(2,\R) & 0 \\ 0 & 1\end{pmatrix}$$
and $M\simeq [\Z/ 2Z]^2$ is the diagonal group group generated by 
$$m_1:=\begin{pmatrix}
  -1  & 0   &0   \\  0 & -1   &  0  \\ 
0 & 0  & 1 \end{pmatrix},\qquad
m_2:=\begin{pmatrix}
  1  & 0   &0   \\  0 & -1   &  0  \\ 
0 & 0  & -1 \end{pmatrix}\, .$$
in $K$. 

For $Y\not\in \R X_0$  we set 
$$W_Y:=\Span_\R\{X_0, Y\}\, ,$$
then $\Omega=\{ W_Y \mid Y\not\in \R X_0\}$.
Since $X_0$ is fixed under the maximal torus $T$, we have 
that $W_Y$ and $W_{t\cdot Y}$ belong to the same 
$K$-orbit for $t\in T$. Thus it suffices to consider
elements $Y$ of the following shape: 
$$Y=Y_{\alpha, \delta,\e} =\begin{pmatrix} \alpha & 0 &\delta \\ 0 & -\alpha &\e \\ \delta& \e & 0 \end{pmatrix}.$$
Let $\Y$ denote the 2-dimensional
projective space of these lines and consider the algebraic mapping 
\begin{equation}\label{map}
K\times \Y \to \X, \ \ (k, [Y])\mapsto k\cdot W_Y\, .
\end{equation}

The group $H$ does not act on $\Y$. However,
let $N_0$ denote the subgroup of order 8,
generated by 
$$
s_0=\begin{pmatrix} 0 & 1 & 0 \\  -1 & 0 & 0 \\ 0 & 0 & 1\end{pmatrix}\, .
$$
and $m_2$ (note that $m_1=s_0^2$).
It follows from the relations 
\begin{equation}\label{N0 action}
s_0\cdot Y_{\alpha, \delta,\e}=Y_{-\alpha, -\e,\delta}\,,\quad
m_2\cdot Y_{\alpha, \delta,\e}=Y_{\alpha, -\delta, \e}\,.
\end{equation}
that $N_0$ acts on $\Y$. Conversely, if $k\in H$ and $k\cdot Y\in\Y$
for some $Y=Y_{\alpha, \delta,\e}\in\Y$ with $\alpha\neq 0$, then
$k\in N_0$.

Since furthermore $k\cdot W_Y=W_{k\cdot Y}$
for $Y\in\Y$ and $k\in N_0$,
the above map (\ref{map}) factorizes to an algebraic map
$$K\times_{N_0} \Y \to \X\, .$$
This map is $K$-equivariant, continuous and onto and   
we wish to show that it
is generically injective.

We  define the following open dense subset of $\Y$: 
\begin{equation} \Y'=\{ [Y_{\alpha, \delta, \e}]\mid 
\alpha\neq 0,\, \delta\neq0,\, \e\neq 0\} 
\end{equation}
and note that it is preserved by $N_0$.
We set $\X'=K\cdot \Y'\subset \X$.

\begin{thm}\label{th=2}
The map 
$$K\times_{N_0} \Y' \to  \X', \ \ [k,[Y]]\mapsto k\cdot W_Y$$
is a $K$-equivariant continuous bijection. 
In particular $\X'$ carries a natural structure of a 
smooth 5-dimensional $K$-manifold. 
\end{thm}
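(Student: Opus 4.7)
The plan is to focus on injectivity, since $K$-equivariance and continuity of the map $[k,[Y]]\mapsto k\cdot W_Y$ are immediate from its algebraic construction, and surjectivity of $K\times_{N_0}\Y'\to\X'$ holds by the very definition $\X'=K\cdot\Y'$. So suppose $k_1\cdot W_{Y_1}=k_2\cdot W_{Y_2}$ with $Y_1,Y_2\in\Y'$ and $k_i\in K$; setting $k:=k_2^{-1}k_1$, the task is to show that $k\in N_0$ and $k\cdot[Y_1]=[Y_2]$, which gives $[k_1,[Y_1]]=[k_2,[Y_2]]$ in $K\times_{N_0}\Y'$.

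The main new ingredient, and the whole reason for restricting to $\Y'$, is the following lemma: \emph{for every $Y=Y_{\alpha,\delta,\e}\in\Y'$, the unique line in $W_Y$ whose generators have a repeated eigenvalue is $\R X_0$.} Its proof is a one-line minor computation. If $X=aX_0+bY$ has double eigenvalue $\nu$, then $X-\nu I$ has rank at most one, so every $2\times 2$ minor of it vanishes. The minor formed from rows $\{1,3\}$ and columns $\{2,3\}$ is
$$\det\begin{pmatrix}0 & b\delta\\ b\e & -2a-\nu\end{pmatrix}=-b^2\delta\e,$$
and since $\delta\e\neq 0$ on $\Y'$ this forces $b=0$. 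I would single out this lemma as the crucial step; everything else is mechanical bookkeeping.

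With the lemma in hand, the argument splits in two. First, $k\cdot X_0\in W_{Y_2}$ still has eigenvalues $1,1,-2$, so by the lemma applied to $Y_2$ we obtain $k\cdot\R X_0=\R X_0$, i.e.\ $k$ lies in $H=MT$. Second, write $k=mt$ with $m\in M$ and $t=R(\theta)\in T$. Since $H$ fixes $X_0$, the equality $k\cdot W_{Y_1}=W_{Y_2}$ reads $k\cdot Y_1=cX_0+c'Y_2$ for some $c,c'\in\R$. A direct computation shows the top-left $2\times 2$ block of $m(tY_1t^{-1})m^{-1}$ equals $\alpha_1\bigl(\begin{smallmatrix}\cos 2\theta & \pm\sin 2\theta\\ \pm\sin 2\theta & -\cos 2\theta\end{smallmatrix}\bigr)$, whereas the corresponding block of $cX_0+c'Y_2$ is diagonal; since $\alpha_1\neq 0$ this forces $\sin 2\theta=0$, so $t\in\langle s_0\rangle$ and $k=mt\in N_0$. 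By the $N_0$-invariance of $\Y$ recorded in (3.2), $k\cdot Y_1\in\Y$, so its $(3,3)$-entry is zero; comparing with the $(3,3)$-entry $-2c$ of $cX_0+c'Y_2$ forces $c=0$, hence $k\cdot Y_1=c'Y_2$ and $k\cdot[Y_1]=[Y_2]$.

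For the smooth-manifold statement, the quotient $K\times_{N_0}\Y'$ carries an evident smooth structure since $N_0$ is finite. One checks that the differential of the algebraic map $K\times\Y'\to\Gr_2(\pf)$ is injective at every point; combined with the bijectivity established above, this promotes the map to a $K$-equivariant immersion with image $\X'$, which therefore inherits a canonical $K$-invariant smooth $5$-dimensional structure.
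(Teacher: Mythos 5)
Your proof is correct, and it follows the same overall skeleton as the paper --- reduce injectivity to showing that the conjugating element $k$ must preserve $\R X_0$, hence lie in $H=MT$, and then force it into $N_0$ --- but the two decisive steps are handled by genuinely different means. Where the paper proves Lemma \ref{lemma X} by writing out the invariant equations (\ref{u1 equation})--(\ref{u2 equation}) and manipulating them into (\ref{first eq}) and (\ref{second eq}), you observe that a \emph{symmetric} matrix with a repeated eigenvalue $\nu$ has $X-\nu I$ of rank at most one, so a single $2\times2$ minor gives $-b^2\delta\e=0$; this is shorter, needs only $\delta\e\neq 0$ rather than the full constraints defining $\Y'$, and packages the conclusion directly in the form used (the only repeated-eigenvalue line in $W_Y$ is $\R X_0$). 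Second, for the passage from $k\in H$ to $k\in N_0$, the paper appeals to its earlier unproved remark (that $k\in H$ with $k\cdot Y\in\Y$ and $\alpha\neq0$ forces $k\in N_0$) together with orthogonality to $X_0$ in the trace form, whereas you write $k=mt$, compare upper-left $2\times2$ blocks to get $\sin2\theta=0$, and read off $c=0$ from the $(3,3)$-entry (in fact the two diagonal entries of that same block already force $c=0$); this makes your argument more self-contained, since it effectively supplies the verification the paper leaves implicit. The only place where you assert more than you check is the closing claim that the differential of $K\times\Y'\to\Gr_2(\pf)$ is injective everywhere; this is true (and can be verified at $(e,[Y])$ using the brackets $[X_i,Y]$ computed in Section 4), but it is not needed for the paper's ``in particular'': since $N_0$ is finite and acts freely, $K\times_{N_0}\Y'$ is already a smooth $5$-dimensional $K$-manifold, and the established continuous bijection transports this structure to $\X'$, which is all the theorem claims.
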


In order to obtain this, we study the intersection of the
$K$-orbit of $X_0$ with $W_Y$. 
We first prove:

\begin{lem}\label{lemma X}
Assume
$$X:=\begin{pmatrix} \lambda & 0   &\delta \\ 0 & \mu & \e \\ 
\delta& \e & -(\lambda + \mu )\end{pmatrix}\in K\cdot X_0.$$ 
Then $\delta=0$ or $\e=0$.
\end{lem}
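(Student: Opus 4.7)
The plan is to use that $X \in K\cdot X_0$ forces $X$ to have eigenvalues $1, 1, -2$ (with multiplicities). Since $X$ is symmetric and hence diagonalizable, the double eigenvalue $1$ means that $X - I$ has a $2$-dimensional kernel, so $X - I$ has rank exactly $1$.

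A rank-$1$ matrix has all $2 \times 2$ minors equal to zero; in particular, the three $2 \times 2$ principal minors of $X - I$ vanish. The one on rows and columns $\{1,2\}$ equals
$$(\lambda - 1)(\mu - 1) - 0^2 = (\lambda-1)(\mu-1),$$
so at least one of $\lambda = 1$ or $\mu = 1$ must hold.

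If $\lambda = 1$, then the principal minor of $X - I$ on rows and columns $\{1,3\}$ equals
$$(\lambda-1)\bigl(-(\lambda+\mu)-1\bigr) - \delta^2 = -\delta^2,$$
which forces $\delta = 0$. Symmetrically, if $\mu = 1$ then the minor on rows and columns $\{2,3\}$ gives $\e = 0$. Since one of the two cases must occur, the lemma follows.

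The main step to spot is the passage from $X$ to $X - I$: one could also try to match the characteristic polynomial of $X$ directly with $(x-1)^2(x+2)$, but this produces nonlinear relations among $\lambda, \mu, \delta, \e$ that are noticeably more cumbersome. Once the rank-$1$ reduction is in place, the argument reduces to three explicit scalar conditions, of which only two actually need to be used.
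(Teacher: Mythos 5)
Your proof is correct, and it takes a genuinely different route from the paper's. The paper stays within its invariant-theoretic framework: it imposes the two equations $u_1(X)=\tr X^2=6$ and $u_2(X)=\det X=-2$ (via Lemma \ref{K-orbits}), manipulates them into the factored identities $\e^2(\lambda-\mu)=(\mu+2)(\mu-1)^2$ and $\delta^2(\mu-\lambda)=(\lambda+2)(\lambda-1)^2$, uses the bounds $|\lambda|,|\mu|\le 2$ coming from $u_1(X)=6$ to read off sign information, and then rules out the case $\lambda=\mu$. You instead exploit the spectral structure directly: $X\in K\cdot X_0$ forces eigenvalues $1,1,-2$, so $X-I$ is a symmetric matrix of rank $1$, and the vanishing of its $2\times 2$ principal minors gives $(\lambda-1)(\mu-1)=0$ and then $\delta=0$ (if $\lambda=1$) or $\e=0$ (if $\mu=1$); all steps check out, since rank $\le 1$ does imply every $2\times 2$ minor vanishes. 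Your argument is shorter and more conceptual, making it transparent that the double eigenvalue is the real mechanism behind the lemma, and in fact it yields slightly more ($\lambda=1$ or $\mu=1$ explicitly). The paper's computation, while more laborious, fits naturally into the surrounding machinery built on the invariants $u_1,u_2$ and produces explicit polynomial relations, but nothing later in the paper depends on those intermediate identities, so your proof could substitute for it without loss.
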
 

\begin{proof} 
It follows from Lemma \ref{K-orbits} that
\begin{align} 
u_1(X) &= 2(\lambda^2 +\mu^2 +\lambda \mu +\delta^2 +\e^2)=u_1(X_0)=6 \label{u1 equation}\\
u_2(X) &= -\lambda\mu (\lambda +\mu) -\e^2\lambda 
- \delta^2 \mu= u_2(X_0)=-2\, .\label{u2 equation}
\end{align} 
In particular, it follows from (\ref{u1 equation})
that
$\lambda^2 +\mu^2 +\lambda \mu \leq 3$.
Since $\lambda^2 +\mu^2 +\lambda \mu=
(\lambda+\frac12\mu)^2+\frac34\mu^2$ this implies
$|\mu| \leq 2.$

Multiplying  by $\frac12\mu$ in (\ref{u1 equation}) and adding
(\ref{u2 equation}) we obtain
$$\mu^3 +\e^2\mu-\e^2\lambda=3\mu-2 \,,$$
or equivalently
\begin{equation}\label{first eq}
\e^2(\lambda-\mu) =(\mu+2)(\mu-1)^2\, .
\end{equation}
In particular it follows that $\e=0$ or 
$\lambda\geq \mu$. 

Since $\mu$ and $\lambda$ appear symmetrically in
(\ref{u1 equation}) and (\ref{u2 equation}) we obtain
similarly $|\lambda|\leq 2$,
\begin{equation}\label{second eq}
\delta^2(\mu-\lambda) = (\lambda+2)(\lambda-1)^2\, ,
\end{equation}
and conclude that $\delta=0$ or 
$\mu\geq \lambda$. 

Notice finally that if $\lambda=\mu$, then  $\lambda=\mu=-2$ or 
$\lambda=\mu=1$ by 
(\ref{first eq}), and from (\ref{u1 equation})
it then follows that $\lambda=\mu=1$ and $\delta=\e=0$.
\end{proof}

We can now prove Theorem \ref{th=2}.

\begin{proof} It remains to be seen that $k\cdot W_Y=W_{Y'}$
implies $k\in N_0$ and $[Y']=[k\cdot Y]$ for $Y,Y'\in\Y'$.
In particular, it follows from $k\cdot W_Y=W_{Y'}$ that 
$k\cdot X_0=aX_0+bY'$ for some $a,b\in\R$. Since $Y'\in\Y'$
it follows from Lemma \ref{lemma X} that $b=0$, and hence
$k\in H$. Now $k\cdot Y$ must be a multiple of $Y'$,
by orthogonality with $X_0$ with respect to the trace form
$\langle A,B\rangle =\tr(AB)$. It follows that $k\in N_0$.
\end{proof}

In order to give a complete classification of 
$\X$, one needs to describe the fibers in $K\times\Y$ above
the elements outside of $\X'$. We omit the details, but mention
that in general the fibers will not be finite.

\section{Alternative approach}

In the following two subsections We describe an alternative 
approach to the elements in $\X$, based on results from
algebraic geometry and possibly useful in the general case. 

\subsection{Generic subspaces} 

Let $L\in \Gr_2(\pf)$. The following property of $L$
is closely related to the property that $K\cdot L=\pf$.
We say that $L$ is {\it generic} if there
exists an element $Z\in L$ such that
\begin{equation}\label{def generic}
[\kf,Z]+L=\pf.
\end{equation}
By reason of dimension, the sum is necessarily direct
if (\ref{def generic}) holds.
Equivalent with  (\ref{def generic}) is that
the map $(k,W)\mapsto k\cdot W$ is submersive at $(1,Z)$.
It follows that $L$ is generic if and only if the
image $K\cdot L$ has non-empty interior in $\pf$. In
particular, if $L\in\X$, then $L$ is generic.

It follows from Proposition \ref{prop surjective}
that $W_Y:=\Span_\R\{X_0, Y\}$ is generic for every
$Y\in\Y$. As we want to proceed independently 
of the computations in Section 2,
we sketch a simple proof of this fact.
Let $Y=Y_{\alpha,\delta,\e}$ where
$(\alpha,\delta,\e)\neq(0,0,0)$, and put
$Z=Y+cX_0$ where $c\in\R$. We claim that
(\ref{def generic}) holds for some $c$.

Let $\kf=\Span_\R(X_1,X_2,X_3)$ where
$$X_1=
\begin{pmatrix}
 0 & 1  &  0  \\ 
-1 & 0  &  0  \\ 
 0 & 0  &  0 
\end{pmatrix}
,\qquad
X_2=
\begin{pmatrix}
 0 & 0  &  1  \\ 
 0 & 0  &  0  \\ 
-1 & 0  &  0 
\end{pmatrix}
,\qquad
X_3=
\begin{pmatrix}
 0 & 0  &  0  \\ 
 0 & 0  &  1  \\ 
 0 &-1  &  0 
\end{pmatrix}.
$$
Then
$$
\begin{aligned}
&[X_1,Z]=
\begin{pmatrix}
 0        & -2\alpha  &  \e  \\ 
-2\alpha & 0          & -\delta  \\ 
\e       & -\delta   &  0 
\end{pmatrix}
\\
&[X_2,Z]=
\begin{pmatrix}
2\delta    & \e       &  -3c-\alpha  \\ 
 \e        & 0         &  0  \\ 
-3c-\alpha & 0         & -2\delta
\end{pmatrix}
\\
&[X_3,Z]=
\begin{pmatrix}
 0      & \delta    &  0  \\ 
\delta & 2\e       & -3c+\alpha  \\ 
 0      &-3c+\alpha &  -2\e 
\end{pmatrix}
\end{aligned}
$$
Hence the condition that $[X_1,Z]$, $[X_2,Z]$, $[X_3,Z]$,
$X_0$ and $Y$ are linearly independent amounts to
$$
\det\begin{pmatrix}
0&-2\alpha&\e&0&-\delta\\
2\delta&\e&-3c-\alpha&0&0\\
0&\delta&0&2\e&-3c+\alpha\\
1&0&0&1&0\\
\alpha&0&\delta&-\alpha&\e
\end{pmatrix}\neq0.
$$
This determinant is a second order polynomial in $c$.
It is easily seen that the coefficient of $c^2$ is
$18\alpha^2$. On the other hand, if $\alpha=0$ then 
the constant term in the polynomial is $2(\delta^2+\e^2)^2$.
In any case, it is a non-zero polynomial in $c$,
and our claim is proved.

It is of interest also to 
see which other spaces $L$ are generic. 

\begin{lem} Let $L\in\Gr_2(\pf)$. Then $L$ is generic
if and only if it is conjugate to
$\Span_\R\{X, Y\}$, where
$$X:=\begin{pmatrix}\lambda&0&0\\0&\mu&0\\0&0&-\lambda-\mu
\end{pmatrix},\quad
Y:=\begin{pmatrix}\alpha&\gamma&\delta\\\gamma&\beta&\e\\
\delta&\e&-\alpha-\beta
\end{pmatrix}$$
and either

{\rm (i)} two of the elements $\lambda,\mu,-\lambda-\mu$ in $X$ are equal

\noindent or 

{\rm (ii)}  $\alpha\mu-\beta\lambda\neq 0$. 
\end{lem}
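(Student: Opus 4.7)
My plan is to prove the two directions separately via a dimension count. The main observation is that for $X\in\pf$, $\dim[\kf,X]=3$ exactly when $X$ has three distinct eigenvalues, and otherwise $\dim[\kf,X]\leq 2$; moreover, when $X=\diag(\lambda,\mu,-\lambda-\mu)$ is regular diagonal, $[\kf,X]$ is precisely the $3$-dimensional space of symmetric matrices with vanishing diagonal, i.e.\ the trace-orthogonal complement of $\af$ in $\pf$.

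For the direction $L$ generic $\Rightarrow$ (i) or (ii), pick $Z\in L$ with $[\kf,Z]+L=\pf$. Since $\dim L=2$ and $\dim\pf=5$, we must have $\dim[\kf,Z]=3$, so $Z$ is regular. Conjugating $L$ by a suitable element of $K$, I may assume $Z=X=\diag(\lambda,\mu,-\lambda-\mu)$ with the three entries distinct. Completing $X$ to a basis of $L$ by any $Y\in L\setminus\R X$ with diagonal part $(\alpha,\beta,-\alpha-\beta)$, the off-diagonal part of $\pf$ is absorbed by $[\kf,X]$, and $[\kf,X]+L=\pf$ reduces to the two diagonal parts of $X$ and $Y$ being linearly independent in $\af$, which is exactly $\alpha\mu-\beta\lambda\neq 0$. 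This gives (ii).

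For the converse, case (ii) follows by taking $Z:=X$ and observing that $\af$ is covered by $L$ (by linear independence of the diagonals) while the off-diagonals are covered by $[\kf,X]$, so $[\kf,X]+L=\pf$. In case (i), two of the eigenvalues of $X$ coincide; after applying an element of $K$ realizing the Weyl permutation that moves the repeated pair into positions $(1,1),(2,2)$ (every permutation of diagonal entries is realized inside $SO(3)$), I have $X=\lambda X_0$ with $\lambda\neq 0$, so $L$ contains $X_0$. I would then normalize $Y$ by first subtracting a multiple of $X_0$ to make $Y$ orthogonal to $X_0$ with respect to the trace form (equivalently $Y_{33}=0$ and $Y_{11}=-Y_{22}$), and then conjugating by the maximal torus $T$ fixing $X_0$ to kill the $(1,2)$-entry of $Y$. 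This realizes $L$ as $W_Y$ for some $Y$ of the form $Y_{\alpha,\delta,\e}$, and the determinant calculation carried out just above the lemma yields a $c\in\R$ for which $Z:=Y+cX_0$ witnesses the genericity of $L$.

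The main obstacle is the normalization step in case (i), which requires a careful analysis of the $T$-action on the $4$-dimensional space $X_0^\perp\cap\pf$; once this is in hand, the paper's already-established determinant computation completes the argument. The forward direction, by contrast, is a direct dimension count, and the only slightly nontrivial ingredient is the observation that regular elements of $\pf$ are the only ones contributing a $3$-dimensional $[\kf,\cdot]$.
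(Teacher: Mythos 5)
Your proof is correct, and its engine is the same as the paper's: everything reduces to the observation that for a regular diagonal $X$ the space $[\kf,X]$ is exactly the three-dimensional space of symmetric matrices with zero diagonal, so that $[\kf,X]+L=\pf$ is equivalent to the diagonal parts of $X$ and $Y$ spanning $\af$, i.e.\ to $\alpha\mu-\beta\lambda\neq 0$. You deviate in two places. In the forward direction you add the (correct and slightly sharper) remark that any witness $Z$ of genericity is automatically regular, since otherwise $\dim[\kf,Z]\leq 2$ and the sum cannot be five-dimensional; the paper instead just diagonalizes $Z$ and, if two entries coincide, lands in case (i). In the converse for case (i), the paper simply notes that $L$ then contains a nonzero matrix with two equal eigenvalues, hence $L\in\X$ by Theorem \ref{eigenvalue theorem}, and membership in $\X$ implies genericity; you instead normalize $L$ to $W_Y$ with $[Y]\in\Y$ and invoke the determinant computation preceding the lemma. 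Your route is equally valid and keeps Section 4 independent of Section 2, and the normalization you flag as the ``main obstacle'' is not one: it is exactly the reduction already used in Section 3 (subtract a multiple of $X_0$, then conjugate by $T$; the $\SO(2)$-action rotates the trace-free symmetric $2\times 2$ block through twice the rotation angle, so the $(1,2)$-entry can always be removed), together with the remark that the resulting $(\alpha,\delta,\e)$ is nonzero because $\dim L=2$.

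One small precision you should add: your case (ii) argument, ``the off-diagonals are covered by $[\kf,X]$,'' is valid only when the diagonal entries of $X$ are distinct, i.e.\ when (i) fails; if (i) holds, $[\kf,X]$ is only two-dimensional and that sentence is false as stated. Since you treat case (i) in full generality (arbitrary $Y$), the disjunction is still covered, but you should say explicitly, as the paper does, that in case (ii) you assume in addition that (i) does not hold.
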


\begin{proof}
Before the proof we make the following observation. Let  $Z\in L$
where $L\in\Gr_2(\pf)$ is arbitrary.
It follows easily from the relation
$\tr([U,Z]V)=\tr(U[Z,V])$ for $U\in\kf$ and $Z,V\in\pf$,
that $[\kf,Z]$ can be characterized as the set of elements $T\in\pf$,
for which $\tr(TV)=0$ for all $V$ in the centralizer of $Z$ in $\pf$.

Assume now that $L=\Span_\R\{X, Y\}$ as above. In case (i),
$L$ belongs to $\X$ and
is generic as established above. Assume (ii) and not (i).
Since the diagonal elements of $X$ are mutually different
it follows that the
centralizer of $X$ in $\pf$ is $\af$, and hence
$[\kf,X]$ consists of the matrices in $\pf$
with zero diagonal entries. Since $\alpha\mu-\beta\lambda\neq 0$
it follows that $X$ and $Y$
are linearly independent from $[\kf,L]$. Hence
$L$ is generic.

Conversely, if $L\in\Gr_2(\pf)$ is generic, then by conjugation
we can arrange that the matrix $Z$ in (\ref{def generic}) is diagonal.
Let $X=Z$. As before it follows that if (i) does not hold, then   
$[\kf,Z]$ consists of all the matrices which are
zero on the diagonal. Hence any $Y\in L$ 
linearly independent from $Z$
must have the mentioned form.
\end{proof}

For example, the subspace $W$ in Remark \ref{remark} (a) is not generic.
On the other hand, it 
is not difficult to find examples of subspaces which are generic, but do
not belong to $\X$. For example when 
$\lambda=0$, $\mu=1$, $\alpha=1$, $\beta=0$,
$\gamma=\delta=0$ and $|\epsilon|>3/2$ in the expressions above,
then $L=\Span_\R\{X, Y\}$ is generic and not in $\X$.

\subsection{Approach via algebraic geometry}
The following evolved from discussions with G\"unter Harder. 

\par Let $\Gr_2(\pf_\C)$ be the complex variety of $2$-dimensional 
complex subspaces of $\pf_\C$. 
For a subspace $L\in \Gr_2(\pf_\C)$ we denote by 
$\oline L$ its complex conjugate. Note that there is 
a natural embedding $\Gr_2(\pf)\hookrightarrow \Gr_2(\pf_\C)$ 
the image of which,
$$\Gr_2(\pf)=\{ L\in \Gr_2(\pf_\C)\mid L=\oline L\}\, ,$$  
constitutes the real points
of $\Gr_2(\pf_\C)$. Let $L_\R=L\cap\pf\in \Gr_2(\pf)$ 
for $L\in \Gr_2(\pf_\C)$ with $\oline L=L$.

\par As before we call $L\in \Gr_2(\pf_\C)$ {\it generic} if 
there exist $Z\in L$ such that
$$ L+[\kf_\C,Z]=\pf_\C\,$$
or equivalently, such that
$$\Phi_L: K_\C\times L \to \pf_\C, \quad (k,z)\mapsto k\cdot z$$
is submersive at $(1,Z)$.
It is clear that the complexification of a
generic subspace in  $\pf$ is generic.
Let us remark that if $L$ is generic then
\begin{itemize}
\item The image $\im\Phi_L$ has non-empty 
Zariski open interior in $\pf_\C$. 
\item The orbit $\O_L:=K_\C\cdot L \in \Gr_2(\pf_\C)$
satisfies: 
$$\dim_\C \O_L= \dim K_\C = 3\, .$$
\end{itemize}

\par Let now $L$ be generic. To $\O_L$ we associate: 
$$\Zc_L:=\{(z,W)\mid z\in W, \ W\in \O_L\}\, .$$
The projection onto the second factor $\pi_2: \Zc_L\to \O_L$ reveals 
the structure of an algebraic  $\C^2$-vector bundle over 
$\O_L$. In particular $\dim_\C \Zc_L=5$.
The projection onto the first factor $\pi_1: \Zc_L \to \pf_\C$
features  
$$\im \pi_1= \im \Phi_L\, .$$
In particular $\im \pi_1$ contains a non-empty Zariski-open set. 

\par Now let us assume that $L$ is the complexification
of a generic subspace in $\pf$. 
Then $\O_L$ and $\Zc_L$ are defined over $\R$. 
The real points of $\Zc_L$ are given by 

$$\X_L:=\Zc_L^\R=\{ (x, W)\in \Zc_L 
\mid W=\oline W, \ x\in W_\R\}\, .$$
Again $\pi_1(\X_L)\subset \pf$ is a constructible set with 
non-empty open interior.

\par In order to determine $\X_L$ we have to determine 
the real points of $\O_L$. In general this is a finite union 
of $K$-orbits which is difficult to determine as one needs to know
the $K_\C$-stabilizer of $L$. 

\par However, if we suppose that $L$ is such that $\O_L\simeq K_\C$,
then 
the real points of $\O_L$ are just $K\cdot L$ and 
$\im \pi_1(\X_L)=K\cdot L_\R \subset \pf$. 
Hence $K\cdot L_\R$ has non-empty Zariski open interior and thus 
$K\cdot L_\R=\pf$ as the left hand side is closed. 
We have thus established that $L\in\X$ for
every generic $L\in\Gr_2(\pf)$ with trivial stabilizer in $K_\C$.

Notice that the stabilizer in $K_\C$ is trivial if the stabilizer
in $K$ is trivial. This can be seen as follows.
Let us denote by $S\subset K_\C$ the stabilizer of 
$L$. As $L$ is generic, $S$ is a discrete subgroup of $K_\C$. 
We have to show that $S$  is trivial if $S\cap K$ is trivial. 
Note that $S=\oline S$. 
Therefore, for $k \in S$, 
$$x:= \oline k^{-1} k \in S\, .$$
Observe that $x=\exp(X)$ for a unique 
$X\in i\kf$. As $x$ is positive definite,  it follows from 
$x\cdot L= L$ that $\exp(\R X)\subset S$. Thus $X=0$ by the 
discreteness of $S$, and hence $k=\oline k$.

Let $\Y''$ denote the following subset of $\Y'\subset\Y$
\begin{equation} \Y''=\{ [Y_{\alpha, \delta, \e}]\mid 
\alpha\neq 0,\, \delta\neq0,\, \e\neq 0,\, \delta\neq \pm\e\} 
\end{equation}
We claim that for $[Y] \in \Y''$, the $K$-stabilizer 
of $W_Y$ is trivial. Assume $k\cdot W_Y=W_Y$ for some $k\in K$.
In the proof of Theorem \ref{th=2} we saw that $k\in N_0$
and $k\cdot Y=\pm Y$,
and then it follows from  (\ref{N0 action}) that $k=e$.
 
\par To summarize, we have shown with alternative methods that 
$W_Y\in\X$ for $Y\in\Y''$.

\par Let us now deal with generic orbits $\O:=\O_L$ where 
the $K_\C$-stabilizer is not necessarily trivial. 
For that we first have to recall the concept of non-abelian 
cohomology (see \cite{S}, Sect. 5.1). 

\par Let $\Gamma$ and $H$ be groups. We assume that $\Gamma$ acts 
on $H$ by preserving the group law of $H$, in other words: 
there exists a homomorphism $\alpha:\Gamma \to \Aut(H)$. 
In the sequel we write for $g\in \Gamma$ and $h\in H$ 
$$ {}^{g}h:=\alpha(g)(h)\, .$$ 
By a cocycle of $\Gamma$ in $H$ we understand a map 
$$\theta:\Gamma\to H, \ \ g\mapsto \theta(g)$$
such that 
$$\theta(g_1 g_2)=\theta(g_1)\cdot  {}^{g_1} \theta(g_2)\,  .$$
The set of all cocycles is denoted by $Z^1(\Gamma, H)$. 
We call two cocycles $\theta, \theta'$ homologous if 
there exists an $h\in H$ such that 
$$\theta'(g)=h^{-1} \theta(g) {}^gh$$
for all $g\in \Gamma$. The corresponding set of 
equivalence  classes $H^1(\Gamma, H)= Z^1(\Gamma, H)/ \sim$ is refered 
to as the first cohomology set of $\Gamma$ with values in 
$H$. 

\par Henceforth we let $\Gamma=\mathrm{Gal}(\C|\R)$ be the Galois 
group of $\C|\R$. We write $\Gamma=\{ 1, \sigma\}$ with $\sigma$ the 
non-trivial element. Note  that $\Gamma$ acts on $K_\C$ by complex 
conjugation. In fact $\sigma$ induces the Cartan involution on $K_\C$.
Likewise $\Gamma$ acts on the stabilizer $S<K_\C$ 
of $L$.

\par Let us denote by $\O(\R)$ the real points of $\O$
and by $[\O(\R)]:= \O(\R)/ K$, the set of all $K$-orbits. 
Then for $k\in K_\C$ such that $z=k\cdot L \in \O(\R)$ we define a cocycle 
$$\theta_k(\sigma):= \sigma(k)^{-1} k\, . $$
Replacing $k$ by $ks$ for $s\in S$
results in $\theta_{ks}(\sigma)= \sigma(s)^{-1} \sigma(k)^{-1}k s$
which is homologous to $\theta_k$. 
Hence  the prescription $\theta_z:=\theta_k$ 
gives us a well defined element in $H^1(\Gamma, S)$. 
Further note that $\theta_{kz}=\theta_{z}$ for all $k\in K$. Therefore 
we obtain a map 

$$\Phi: [\O(\R)]\to H^1(\Gamma, S), \ \ [z]\mapsto \theta_z\, .$$

It is easy to check that $\Phi$ is injective and it remains to 
characterize the image.  
For that we consider the natural map between pointed sets
$$\Psi: H^1(\Gamma, S) \to H^1(\Gamma, K_\C)\, .$$ 
Define $\ker \Psi:=\Psi^{-1}({\bf 1})$. We recall that twisting 
(cf. \cite{S}, Sect. 5.4) implies that all pre-images 
of $\Psi$ are in fact kernels with $S$ and $K_\C$ replaced by twists.

\par We claim that $\Im \Phi =\ker \Psi$. The inclusion 
``$\subset$'' is obvious. Suppose that $\Psi(\theta)={\bf 1}$ is the trivial 
cocycle. Then $\theta(\sigma)=\sigma(k)^{-1} k $ for some $k\in K_\C$. 
As $\theta(\sigma)\in S$ it follows that $k\cdot L\in \O(\R)$ and our
claim is established. 

We have thus shown that 

\begin{equation} [\O(\R)]\simeq \ker \left(H^1(\Gamma, S)
\to H^1(\Gamma, K_\C) \right)\end{equation}
and thus 
\begin{equation} K\cdot L =\pf \quad \iff  \quad \ker \left(H^1(\Gamma, S)
\to H^1(\Gamma, K_\C) \right)=\{\mathbf 1\}\,  .\end{equation}
In the next step we wish to characterize the cohomology sets
involved. For $H^1(\Gamma, K_\C)$ we can use Harder's Theorem 
(cf. \cite{G}, Theorem III) to obtain 
\begin{equation} H^1 (\Gamma, K_\C)=\Z/2 \Z\, .\end{equation}
In order to discuss the structure of 
$H^1(\Gamma, S)$ we mention its more convenient description 
as a subset of $S$: 
$$H^1(\Gamma, S)=\{ s\in S\mid s\sigma(s)=1\}/\sim$$
where $s\sim s'$ if $s'= hs\sigma(h)^{-1}$ for some $h\in S$. 
Now the fact that $L$ is generic and $S$ is $\sigma$-stable implies 
that $S\subset K$ (see our argument from above).
Hence 
$$H^1(\Gamma, S)=\{ s\in S\mid s^2 =1\}/\sim$$
with $s\sim s'$ if $s'= hs h^{-1}$ for some $h\in S$.
\par To see an example let us consider $L$'s which correspond 
to subspaces $\Y'\backslash \Y''$. For those $L$ the stabilizer is
contained in $N_0\simeq (\Z/ 4\Z)\rtimes (\Z/2Z)$. 
Actually the stabilizer is either $\{{\bf 1}, m_2 s_0\}$
or  $\{{\bf 1}, s_0 m_2\}$.
Let us consider the first case: with $\gamma:= m_2 s_0$ 

$$\gamma=\begin{pmatrix} 0 & 1 & 0 \cr 1 & 0 & 0 \cr 
0 & 0 & -1\end{pmatrix}\, .$$
We have to show that $\gamma\not \in \ker \Psi$.  Now $\gamma\in \ker \Psi$ 
means $\gamma= k \sigma(k)^{-1}$ for some $k\in K_\C$, or equivalently 
$$\gamma \sigma(k) = k\, .$$ 
This means that the last row of $k$ consists of imaginary 
elements; a contradiction to the fact that the sum of their
squares adds up to one. 
Hence $\ker \Psi$ is trivial and therefore $K\cdot L= \pf$.


\begin{thebibliography}{99} 
\bibitem{G} T. Geisser, {\it Galoiskohomologie reeller halbeinfacher 
algebraischer Gruppen}, Abh. Math. Sem. Univ. Hamburg {\bf 61} (1991), 
231--242
\bibitem{S} J.-P. Serre, {\it Galois Cohomology}, Springer Monographs
in Math., 1996
\end{thebibliography}
\end{document}